\newcommand \acknowledgements{\subsection*{Acknowledgements}}
\newif\iffpsac
\newcommand\addressmark[1]{\relax}
\address[Andrei Com\u{a}neci]{
 	Technische Universit\"{a}t Berlin,
 	Chair of Discrete Mathematics/Geometry \\
 	\texttt{comaneci@math.tu-berlin.de}	
}
\address[Michael Joswig]{
 	Technische Universit\"{a}t Berlin,
 	Chair of Discrete Mathematics/Geometry; \\
 	Max-Planck Institute for Mathematics in the Sciences, Leipzig \\
 	\texttt{joswig@math.tu-berlin.de}
}
\newcommand\NN{{\mathbb N}}
\newcommand\PP{{\mathbb P}}
\newcommand\RR{{\mathbb R}}
\newcommand\TT{{\mathbb T}}
\newcommand\ZZ{{\mathbb Z}}
\newcommand\cS{{\mathcal S}}
\newcommand\cL{{\mathcal L}}
\newcommand\cT{{\mathcal T}}
\newcommand\frT{{\mathfrak T}}
\DeclareMathSymbol{\0}{\mathord}{bbold}{`0}
\DeclareMathSymbol{\1}{\mathord}{bbold}{`1}
\newcommand\smallSetOf[2]{\{#1 \mid #2\}}
\newcommand\torus[1]{\RR^{#1}/\RR\1} 
\newcommand\TP{{\TT\PP}}
\DeclareMathOperator\conv{conv}
\DeclareMathOperator\tconv{tconv}
\DeclareMathOperator\nvol{nvol}
\DeclareMathOperator\argmin{arg\,min}
\DeclareMathOperator\argmax{arg\,max}
\DeclareMathOperator\relint{relint}
\DeclareMathOperator\lca{lca}
\DeclareMathOperator\Tr{Tr}
\DeclareMathOperator{\FW}{FW} 
\DeclareMathOperator\CovDec{CovDec} 
\DeclareMathOperator\TGr{TGr} 
\newcommand\dd[1]{\operatorname{d}_{#1}} 
\newcommand\supp{\text{supp}}
\newcommand\transpose[1]{#1^\top}
\newtheorem{theorem}{Theorem}
\newtheorem{lemma}[theorem]{Lemma}
\newtheorem{proposition}[theorem]{Proposition}
\newtheorem{corollary}[theorem]{Corollary}
\theoremstyle{remark}
\newtheorem{example}[theorem]{Example}
\newtheorem{question}[theorem]{Question}
\definecolor{tolorange}{RGB}{238,119,51}  
\definecolor{tolblue}{RGB}{0,119,187}     
\definecolor{tolgreen}{RGB}{0,153,136}    
\definecolor{tolpurple}{RGB}{136,34,85}   
\definecolor{ibmyellow}{RGB}{255, 176, 0} 
\definecolor{darkgray}{RGB}{64,64,64} 
\tikzstyle{blackdot} = [circle,draw=black,fill=black,scale=0.5]
\newcommand \breakDOI[1]
\title{Parametric Fermat--Weber and tropical supertrees}
\author{Andrei Com\u{a}neci\addressmark{1} \and Michael Joswig\addressmark{1,2}}
\newcommand\thisabstract{%
  We study a parametric version of the Fermat--Weber problem with respect to an asymmetric distance function, which occurs naturally in tropical geometry.
  Our results yield a method for constructing phylogenetic supertrees.}
\abstract{\thisabstract}
\begin{document}

\maketitle

\iffpsac
\else
\begin{abstract}
  \thisabstract
\end{abstract}
\fi

\section{Introduction}
\noindent
The Fermat--Weber problem is a classical location problem that seeks a point whose average distance to a finite set of sites is minimal; such a point is called a \emph{Fermat--Weber point}.
The history of the problem can be traced back to Fermat and gained popularity in applied context under the work of Weber; see \cite[Chapter~II]{BMS:1999} for a historical account.
In \cite{ComaneciJoswig:2205.00036} we looked at the Fermat--Weber problem for an asymmetric tropical distance function.
In this paper, we focus on a parametric version of the tropical Fermat--Weber problem; see \cite{Brandeau+Chiu:91} for a survey on general parametric location problems.
More precisely, we consider the case where each site moves along a line, and we study how the set of Fermat--Weber points changes.
The results in \cite{ComaneciJoswig:2205.00036} heavily exploit the known structure of tropically convex sets in terms of regular subdivisions of products of simplices \cite{Develin+Sturmfels:2004}; see also \cite[\S6.3]{ETC}.
Here we analyze the relevant secondary fans in more detail, by looking at the variation of regular subdivisions with respect to changing the height function.
In this way we show that the Fermat--Weber set asymptotically stabilizes from a combinatorial point of view.
Further, we give an explicit bound for that stabilization to occur.
This is our first contribution.

Our study of tropical Fermat--Weber points is motivated by phylogenetics \cite{SempleSteel:2003}.
That field of computational biology is concerned with organizing ancestral relations in biological data into trees.
In our scenario, the \emph{taxa}, which are the individual points in a given data set, are associated with the leaves of a tree; the root marks the most recent common ancestor.
A basic problem, which occurs in practice, asks to compile the information from several trees into a single tree.
In the easier case the individual trees all share the same taxa, and the resulting tree is called a \emph{consensus tree}.
Due to the tremendous interest in phylogenomics methods, and also to differing demands in different application scenarios, many methods were developed over the last few decades \cite{Kapli+Yang+Telford:2020}.
They all output only estimates of the true phylogeny and thus it is necessary to evaluate the confidence of the results.
Apart from Bayesian methods, where posterior distributions yield measures for confidence, the usual approach is to use bootstrapping generating solution sets.
In any case, assessment usual involves consideration of many phylogenetic trees and the search for a consensus.
Similarly, trees resulting from distinct methods, for a fixed number of taxa, often disagree, and again finding a consensus is of central interest.
Since this turns out to be a nontrivial task, computing consensus trees became a topic of its own  \cite{Bryant:2003,propSupertree,Bryant+Francis+Steel:2017}.

In \cite{ComaneciJoswig:2205.00036} we constructed a consensus tree as a Fermat--Weber point in tree space.
In a more general setting, the input trees may have distinct sets of taxa; e.g., see \cite{li20:_phylog_sars_cov}.
Here the resulting tree is called a \emph{supertree}, and our second contribution is a new method for their construction.
These \emph{tropical supertrees} arise from combining the tropical consensus trees from \cite{ComaneciJoswig:2205.00036} with our new parametric solution of the tropical Fermat--Weber problem and the \emph{big-M method} in optimization.
The latter technique explores the behavior of the feasible domain of an optimization problem at infinity by computing with sufficiently large real values; see, e.g., \cite[\S11.2]{Schrijver:1986}.


\acknowledgements
We want to thank Heike Siebert for valuable discussions on phylogenomics.
Support by the Deutsche Forschungsgemeinschaft (DFG, German Research Foundation) gratefully acknowledged.
AC has been supported by \enquote{Facets of Complexity} (GRK 2434, project-ID 385256563).
MJ has been supported by \enquote{Symbolic Tools in Mathematics and their Application} (TRR 195, project-ID 286237555).

\section{Parametric tropical medians}\label{sec:parametric}
\noindent
Before we can look at algorithms for phylogenetics, we need to investigate specific questions concerning covector decompositions in tropical convexity; see \cite[\S6.3]{ETC}.
Later metric trees will occur as points in a suitable tropical projective torus, and a tropical supertree is a tree in a certain cell in the covector decomposition generated by those points.

A nonempty set $S\subseteq\RR^n$ is \emph{tropically convex} (with respect to $\max$) if $z$ lies in $S$, where $z_i=\max(\lambda+x_i,\mu+y_i)$, for all $x,y\in S$ and $\lambda,\mu\in\RR$.
Each tropically convex subset of $\RR^n$ contains $\RR\1$, and hence it makes sense to consider tropically convex sets in the quotient $\torus{n}$, which is the \emph{tropical projective torus}; here $\1$ denotes the all ones vector.
The \emph{tropical convex hull} of $S$ is the smallest tropically convex set containing $S$, and a \emph{tropical polytope} is the tropical convex hull of finitely many points in $\torus{n}$.
It may happen that a tropical polytope is a convex subset in the ordinary sense; then it is a \emph{polytrope}; cf.\  \cite[\S6.5]{ETC}.
Any finite point set $V\subset\torus{n}$ induces a polyhedral decomposition of $\torus{n}$, the \emph{covector decomposition} $\CovDec(V)$, such that the bounded cells comprise the tropical convex hull $\tconv(V)$.
Each cell of the covector decomposition is a polytrope.
The matrix $V$ in $\RR^{m\times n}$ may also be seen as a lifting function on the product of simplices $\Delta_{m-1}\times\Delta_{n-1}$.
This gives rise to a regular subdivision, $\Sigma(V)$; see \cite[\S1.2]{ETC} for a brief introduction and \cite{DLRS} for more comprehensive information.
It turns out that $\Sigma(V)$ is dual to $\CovDec(V)$ \cite[Corollary~6.15]{ETC}.

We consider the \emph{asymmetric tropical distance} function $\dd{n}$ on $\RR^n$, which we define as
\begin{equation}\label{eq:dist}
  \dd{n}(x,y)=\sum_i (x_i - y_i) - n\cdot\min_j(x_j - y_j) \enspace.
\end{equation}
This descends to the quotient $\torus{n}$.
The function $\dd{n}$ is the convex distance function induced by the standard simplex $\Delta_{n-1}=\conv(e_1,\dots,e_n)$, where the vectors $e_i$ form the standard basis of $\RR^n$.
Since the polytope $\Delta_{n-1}$ has codimension one in $\RR^n$, passing to $\torus{n}$ is natural.

The crucial algorithmic ingredient is the following optimization problem that we studied in \cite{ComaneciJoswig:2205.00036}.
Given a set, $V$, of $m$ points in $\torus{n}$ we want to find a point $x\in\torus{n}$ such that $\sum_{v\in V}\dd{n}(v,x)$ is minimal.
Such a point, in general, is not unique, and the set of all minimizers is the \emph{asymmetric tropical Fermat--Weber set}, denoted $\FW(V)$.
We can identify minimizers by looking at the unique cell of $\Sigma(V)$ that contains the vertex barycenter $(\frac{1}{m}\1,\frac{1}{n}\1)$ of $\Delta_{m-1}\times\Delta_{n-1}$.
We name it the \emph{central cell} of $\Sigma(V)$ and its dual in $\CovDec(V)$ the \emph{central covector cell}.
We recall the following result.
\begin{theorem}[{\cite[Theorem~3]{ComaneciJoswig:2205.00036}}]\label{thm:FW}
  The Fermat--Weber set $\FW(V)$ agrees with the central covector cell in $\CovDec(V)$.
  In particular, $\FW(V)$ is a bounded polytrope in $\torus{n}$, and it is contained in the tropical polytope $\tconv(V)$.
\end{theorem}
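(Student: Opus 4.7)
The plan is to reformulate the Fermat--Weber problem as a linear program and identify its primal optimizers with the central covector cell via LP duality and complementary slackness, invoking the standard duality between transportation polytopes and regular subdivisions of products of simplices.

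First, using $-n\min_j(v_j - x_j) = n\max_j(x_j - v_j)$, the objective decomposes as
\[
  \sum_{v\in V}\dd{n}(v,x) \;=\; \sum_{v\in V}\sum_i v_i \;-\; m\sum_j x_j \;+\; n\sum_{v\in V}\max_j(x_j - v_j).
\]
A direct check shows invariance under $x\mapsto x + \lambda\1$, so the problem descends to $\torus{n}$. Introducing auxiliary variables $t_v \ge x_j - v_j$ for every $v\in V$ and every $j\in[n]$, minimizing this expression becomes the linear program
\[
  \min\ n\sum_{v\in V} t_v - m\sum_{j=1}^n x_j \quad\text{subject to}\quad t_v - x_j + v_j \ge 0 \ \ \forall v,j,
\]
with $x\in\RR^n$ and $t\in\RR^m$ free.

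Second, I would take the LP dual with multipliers $y_{v,j}\ge 0$ attached to the constraints. The dual reads
\[
  \min \sum_{v,j} y_{v,j}\, v_j \quad\text{subject to}\quad \sum_j y_{v,j} = n\ \forall v,\ \ \sum_v y_{v,j} = m\ \forall j,
\]
which, after normalisation $z_{v,j} = y_{v,j}/(mn)$, is a transportation problem whose feasible region has row marginals $\tfrac{1}{m}\1$ and column marginals $\tfrac{1}{n}\1$, i.e.\ the marginals of the vertex barycenter $(\tfrac{1}{m}\1,\tfrac{1}{n}\1)$ of $\Delta_{m-1}\times\Delta_{n-1}$. By the standard correspondence between regular subdivisions of $\Delta_{m-1}\times\Delta_{n-1}$ and supports of optimal transportation plans at given marginals (an instance of \cite[Corollary~6.15]{ETC}), the optimal face of this LP is precisely dual to the central cell of $\Sigma(V)$.

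Third, complementary slackness states that a primal $x$ is optimal if and only if there is a dual-optimal $y$ with $y_{v,j} > 0$ only when $t_v = x_j - v_j$, equivalently when $j\in\argmax_k(x_k - v_k)$. Reading this through the covector dictionary, this is exactly the condition that the covector of $x$ contains the vertex set of the central cell of $\Sigma(V)$, which is the defining property of the central covector cell of $\CovDec(V)$. The ``in particular'' assertions follow at once: every cell of $\CovDec(V)$ is a polytrope; the central covector cell is bounded because its dual cell in $\Sigma(V)$ is full-dimensional at the barycenter; and bounded cells of $\CovDec(V)$ are contained in $\tconv(V)$.

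The main obstacle I expect is in the last translation: one has to verify that the covector characterization carries over cleanly from \emph{``support of some optimal $y$''} to precisely the central cell of $\Sigma(V)$, without inadvertently slipping to a neighbouring cell on the boundary, and that the dual covector cell is indeed the one named central in the theorem. Both are consequences of applying the duality in \cite[Corollary~6.15]{ETC} at the barycenter, but this is the step most prone to sign and orientation errors and deserves the most careful bookkeeping.
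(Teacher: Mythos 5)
This theorem is quoted in the paper from \cite[Theorem~3]{ComaneciJoswig:2205.00036} without proof, and your LP-duality/transportation argument is exactly the method of that source (the paper itself advertises as much when it says computing tropical medians reduces to a transportation problem solvable by the network simplex method). So the approach is right, and steps one and two are correct as written. The place where you must be careful is precisely the one you flag, and as written it does not come out on the side the paper's conventions demand: complementary slackness puts the support of $y$ on pairs $(v,j)$ with $j\in\argmax_k(x_k-v_k)=\argmin_k(v_k-x_k)$, which is the \emph{min}-covector graph of $x$, and your dual problem \emph{minimizes} $\sum_{v,j}y_{v,j}v_j$, whose optimal supports are the cells of the subdivision induced by the \emph{lower} envelope of the lifting; Section~2 of the paper, by contrast, defines $\CovDec(V)$ by the max-condition $v_{ij}-x_j=\max_k(v_{ik}-x_k)$ with $\Sigma(V)$ its dual. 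These are genuinely different objects: for $V$ with rows $(0,0,0)$ and $(6,0,0)$ in $\torus{3}$, the minimizer of $\sum_v\dd{3}(v,x)$ is the single point $(0,0,0)$, while the cell of the max-covector decomposition dual to the barycentric cell of the upper-envelope subdivision is $\{(6,0,0)\}$. So your argument correctly proves that $\FW(V)$ is the covector cell dual to the barycentric cell of the lower-envelope subdivision, and to match the printed statement you must either verify that this is the convention actually intended for $\Sigma(V)$ and $\CovDec(V)$ or flip a sign in your LP; this is a bookkeeping fix, not a flaw in the method. One small additional point: boundedness of the cell follows from the barycenter being an interior point of $\Delta_{m-1}\times\Delta_{n-1}$ (so the covector graph covers every node), not from full-dimensionality of the dual cell, which can fail for degenerate $V$.
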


Our first goal is to generalize Theorem~\ref{thm:FW} to the situation where the point configuration $V$ depends on a parameter.
To this end we consider points $u_1,\dots,u_m,w_1,\dots,w_m\in\RR^n$, and we call the $m$ affine lines
\begin{equation}\label{eq:affine-family}
  u_1+M w_1 \,,\ u_2+M w_2 \,,\ \dots \,,\ u_m+M w_m
\end{equation}
in $\RR^n$ an \emph{affine family} of point configurations, depending on a real parameter $M$.
The affine lines \eqref{eq:affine-family} may be seen as an affine family in $\torus{n}$.
Picking a specific value $M_0\in\RR$ we set $v_i=v_i(M_0)=u_i+M_0w_i$, and $V(M_0)=\{v_1,\dots,v_m\}$ is a usual configuration of $m$ points in $\RR^n$ or $\torus{n}$.
Often we write $V(M)=U+M\cdot W\in\RR^{m\times n}$ as a matrix, and $v_i$ is the $i$th row.
Our refined goal is now to describe the Fermat--Weber set $\FW(V(M))$ for $M$ sufficiently large.
Following standard practice, and to avoid cumbersome notation, we intentionally blur the line between $M$ as a real number and $M$ as a formal parameter.
The name \enquote{$M$} is chosen as a hint to the big-M method in optimization; see, e.g., \cite[\S11.2]{Schrijver:1986}.
We may view an affine family as a set of points, where each point travels along an affine line (into the positive direction).

To understand the behavior of $\CovDec(V(M))$ with the change of $M$ we make use of the duality to the regular subdivision $\Sigma(V(M))$ of $\Delta_{m-1}\times\Delta_{n-1}$.
There are only finitely many regular subdivisions of $\Delta_{m-1}\times\Delta_{n-1}$, and the lifting function corresponding to any fixed subdivision, $\Sigma$, form a relatively open polyhedral cone, called the \emph{relative open secondary cone} of $\Sigma$.
It follows that the height functions on $\Delta_{m-1}\times\Delta_{n-1}$ give rise to a complete polyhedral fan in the vector space  $\RR^{m\times n}$; this is the \emph{secondary fan} of $\Delta_{m-1}\times\Delta_{n-1}$.
The finiteness of the secondary fan implies that there exists $M_0\in\RR$ such that for all real $M_1,M_2>M_0$ the point configurations $V(M_1)$ and $V(M_2)$ lie in the same secondary cone, and thus $\Sigma(V(M_1))=\Sigma(V(M_2))$.
We say that the regular subdivisions $\Sigma(V(M))$ \emph{stabilize} for $M\gg 0$, and we use the same term for other polyhedra or polyhedral complexes depending on $M$.

Let $C$ be a covector cell of $\CovDec(V(M))$ and $C^\vee$ its dual cell in $\Sigma(V(M))$.
We can identify the cell $C^\vee$ with a bipartite graph $G(C)$ on the node set $[m]\sqcup [n]$, such that $(i,j)$ is an edge for $i\in[m]$ to $j\in[n]$ if and only if the vertex $(e_i,e_j)$ belongs to $C$; cf.\ \cite[\S4.7]{ETC}.
The graph bipartite $G(C)$ is called the \emph{covector graph} of the cell $C$.
We emphasize that for every $x\in C$ and every edge $(i,j)$ in $G(C)$ we have $v_{ij}(M)-x_j=\max_{k\in[n]}(v_{ik}(M)-x_k)$; cf. \cite[\S6.3]{ETC}.

In order to understand how the Fermat--Weber set $\FW(V(M))$ behaves with changing $M$, it suffices to track the tropical vertices.
By \cite[Theorem~6.38]{ETC} any polytrope in $\torus{n}$ has at most $n$ tropical vertices.
Each tropical vertex, $t_i(M)$, can be obtained by maximizing the linear functional $n\cdot e_i-\1$ on the Fermat--Weber set $\FW(V(M))$, where $i\in[n]$; see \cite[Theorem 3.26]{ETC}.
Note that duplicates may occur when $\FW(V(M))$ is not full-dimensional.
For our application to phylogenetics below, we want to pick a point in the relative interior of $\FW(V(M))$ consistently.
In \cite{ComaneciJoswig:2205.00036} we chose the ordinary average of the tropical vertices, i.e., the average of the distinct values of the set $\smallSetOf{t_i(M)}{i\in[n]}$.
The latter point is the \emph{tropical median} of the configuration $V(M)$.
The following result is the desired parametric version of Theorem~\ref{thm:FW}.

\begin{proposition} \label{prop:affDep}
  The combinatorial types of covector decompositions $\CovDec(V(M))$ stabilize for $M\gg 0$.
  That is, there exists $M_0>0$ such that $\CovDec(V(M))$ does not depend on $M$, provided that $M>M_0$.
  Moreover, for all $M>M_0$, the tropical vertices of $\FW(V(M))$ and the tropical median depend affinely on $M$.
  That is, there exist $p_i,q_i\in\torus{n}$, for $i\in[n]$, such that $t_i(M)=p_i+Mq_i$ for all $M>M_0$, and a similar statement holds for the tropical median.
\end{proposition}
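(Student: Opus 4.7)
The plan is to combine the finiteness of the secondary fan of $\Delta_{m-1}\times\Delta_{n-1}$ with a parametric linear programming argument. First I would observe that $M \mapsto V(M) = U + MW$ traces an affine line in $\RR^{m\times n}$. Since the secondary fan is a finite complete polyhedral fan, this line meets only finitely many of its relatively open cones, so its preimage is a partition of $\RR$ into finitely many intervals on which $\Sigma(V(M))$ is constant. Taking $M_0$ larger than every boundary value of these intervals yields a threshold beyond which $\Sigma(V(M))$, and hence by duality $\CovDec(V(M))$, is combinatorially constant; this already establishes the first two assertions.

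For the affine dependence of the tropical vertices, I would fix $M > M_0$ and let $G$ be the covector graph of the central covector cell, which by Theorem~\ref{thm:FW} coincides with $\FW(V(M))$. Using the covector characterization recalled above, $\FW(V(M))$ is cut out inside $\torus{n}$ by the linear inequalities $x_j - x_k \le v_{ij}(M) - v_{ik}(M)$, where $(i,j)$ ranges over the edges of $G$ and $k$ over $[n]$, together with the equalities witnessing that the vertices $(e_i, e_j)$ for $(i,j) \in G$ actually belong to the central cell. The crucial observation is that the constraint matrix is independent of $M$ while the right-hand sides are affine in $M$. Hence $\FW(V(M))$ is a parametric polytope of the classical kind: each of its ordinary vertices is cut out by a fixed maximal system of tight constraints and therefore depends affinely on $M$ across the stabilization interval.

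A tropical vertex $t_i(M)$ is then obtained by maximising $n e_i - \1$ over $\FW(V(M))$. Because the combinatorial type of $\FW(V(M))$ does not change for $M > M_0$, the face of optimisers for this functional is combinatorially fixed, and choosing a consistent representative ordinary vertex on this face (say a lexicographically minimal one) yields $t_i(M) = p_i + M q_i$ for suitable $p_i, q_i \in \torus{n}$. The tropical median is the average of the distinct entries of $\{t_1(M), \dots, t_n(M)\}$, so its affine dependence reduces to showing that the pattern of coincidences $t_i(M) = t_j(M)$ is the same throughout the stabilization range.

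This last step is where I expect the main obstacle to lie, since it requires translating the geometric coincidence $t_i(M) = t_j(M)$ into a purely combinatorial property of the covector graph $G$. I anticipate the argument going as follows: such a coincidence occurs precisely when the maximising faces for $n e_i - \1$ and $n e_j - \1$ share a common ordinary vertex, and this face-incidence data is encoded in the face poset of $\CovDec(V(M))$; since that poset is constant for $M > M_0$, the coincidences are also constant, and the tropical median becomes an average of a fixed subset of affine functions of $M$, hence itself affine.
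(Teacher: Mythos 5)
Your proposal is correct and follows essentially the same route as the paper: finiteness of the secondary fan of $\Delta_{m-1}\times\Delta_{n-1}$ gives the stabilization, and then the fixed covector graph of each tropical vertex turns its defining conditions into a linear system with $M$-independent matrix and affine right-hand side (the paper solves this system by propagating coordinates along a walk in the connected covector graph, after normalizing one coordinate to $0$, rather than by invoking parametric LP, but this is the same computation). The final ``obstacle'' you flag for the tropical median is easier than you anticipate: once each $t_i(M)=p_i+Mq_i$ is affine on the ray $M>M_0$, two of them either coincide identically or agree for at most one value of $M$, so after enlarging $M_0$ past these finitely many crossings the coincidence pattern is constant and the median is an average of a fixed subset of affine functions.
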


It is worth pointing out that the same $M_0$ works for the covector decomposition, the tropical vertices and the tropical median.

\begin{proof}
  Since the regular subdivisions $\Sigma(V(M))$ stabilize for $M\gg 0$, it follows that the combinatorial types of covector decompositions $\CovDec(V(M))$ stabilize for $M\gg 0$, as they are dual to $\Sigma(V(M))$.
  According to Theorem~\ref{thm:FW}, the Fermat--Weber set $\FW(V(M))$ corresponds to the central cell of $\CovDec(V(M))$, so the stabilization of the covector decomposition implies that all tropical vertices of $\FW(V(M))$ share the same covector graph for $M$ sufficiently large.

  Consider the tropical vertex $x=x(M)=t_i(M)$, for some $i\in[n]$, and let $G$ be its covector graph for $M \gg 0$.
  Since $x$ is a vertex of $\CovDec(V(M))$, by \cite[Corollary 6.26]{ETC}, the graph $G$ is connected.
  So there exists a walk $W$ in $G$ visiting every coordinate node of $G$.
  To complete our proof we will proceed inductively along the walk $W$.

  To this end we fix $x_p=0$ for the first coordinate node $p\in[n]$ which occurs in $W$, and our goal is to recover all the other coordinates of $x$ by moving along the walk $W$.
  Let $x_r$ be a coordinate yet unknown, but we already have the value $x_q$ of the coordinate node $q$ which occurs before $r$ in $W$.
  By the induction hypothesis, we assume that $x_q$ depends affinely of $M$.
  Then there exists $j\in[m]$ such that $(q,j)$ and $(j,r)$ are arcs in $W$.
  This yields $v_{jq}(M)-x_q(M)=v_{jr}(M)-x_r(M)$.
  Since $x_q(M)$, $v_{jq}(M)$, and $v_{jr}(M)$ depend affinely on $M$, the coordinate $x_r(M)$ must also depend affinely on $M$.
\end{proof}

\begin{example} \label{ex:paramTM}
  Consider the affine family in $\torus{3}$ given by the three rows of the matrix 
  \[
    V(M) \ = \
    \begin{pmatrix}
      M+2  & -M-3  & 1  \\
      -6   & 2     & 4   \\
      -M+4 & -M-10 & 2M+6
    \end{pmatrix} \enspace .
  \]
  Here we pick representatives in $x+\RR\1$ such that $x_1+x_2+x_3=0$, as in \cite{ComaneciJoswig:2205.00036}.
  Then $\FW(V(M))$ agrees with $\tconv^{\max}(V(M))$ for $M\gg 0$, and so the tropical median is the average of the input points:
  \[ c(M)=\left(0,\, \frac{-2M-11}{3},\, \frac{2M+11}{3}\right) \enspace. \]
\end{example}

For computations it is useful to have an explicit threshold $M_0$ such that $\CovDec(V(M))$ has the same combinatorial type for any $M>M_0$.
To this end it will turn out to be helpful to gather specific information on the complexity of the secondary fan of the products of simplices $\Delta_{m-1}\times\Delta_{n-1}$.
For a (not necessarily regular) triangulation $\Sigma$ of $\Delta_{m-1}\times\Delta_{n-1}$ we define
\[
  \gamma_{ij} \ = \ \sum_{\begin{subarray}{c} \sigma \text{ maximal cell of } \Sigma \\ \text{with } e_i{\times}e_j \in\sigma \end{subarray}} \nvol(\sigma) \enspace,
\]
where $\nvol(\sigma)$ is the $(m{+}n{-2})$-dimensional normalized volume, which is integral.
The nonnegative integral matrix $\gamma(\Sigma)=(\gamma_{ij})_{i,j}\in\NN^{m\times n}$ is the \emph{GKZ-vector} of $\Sigma$; see \cite[Definition 5.1.6]{DLRS}.
The convex hull of the GKZ-vectors of all triangulations is known as the \emph{secondary polytope} of $\Delta_{m-1}\times\Delta_{n-1}$, and its vertices are precisely the GKZ-vectors of the regular triangulations; see \cite[Theorem 5.1.9]{DLRS}.

\begin{lemma} \label{lem:edgeSecPoly}
  Let $\Sigma$ be any (regular) triangulation of $\Delta_{m-1}\times\Delta_{n-1}$ with GKZ-vector $\gamma(\Sigma)=(\gamma_{ij})_{i,j}$.
  Then $\gamma_{ij}\leq\tbinom{m+n-2}{m-1}$ for all $i\in[m]$ and $j\in[n]$.
\end{lemma}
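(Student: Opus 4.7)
The plan is to recognize $\gamma_{ij}$ as a partial sum of normalized volumes of maximal simplices in a triangulation of $\Delta_{m-1}\times\Delta_{n-1}$, and to bound this by the total normalized volume of the polytope, which is classically equal to $\binom{m+n-2}{m-1}$.

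First, I would recall the standard fact that
\[
  \nvol(\Delta_{m-1}\times\Delta_{n-1}) \ = \ \binom{m+n-2}{m-1} \enspace,
\]
which can be seen, for instance, from the staircase triangulation of the product of simplices, whose maximal cells are unimodular and in bijection with monotone lattice paths from $(0,0)$ to $(m-1,n-1)$.

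Next, I would use that any triangulation $\Sigma$ of $\Delta_{m-1}\times\Delta_{n-1}$ consists of full-dimensional simplices with pairwise disjoint interiors whose union is the whole polytope. Hence
\[
  \sum_{\sigma \text{ maximal in } \Sigma} \nvol(\sigma) \ = \ \nvol(\Delta_{m-1}\times\Delta_{n-1}) \ = \ \binom{m+n-2}{m-1} \enspace.
\]
The quantity $\gamma_{ij}$ is obtained by restricting this sum to the subset of maximal cells containing the specific vertex $e_i\times e_j$. Since each summand $\nvol(\sigma)$ is a nonnegative integer, discarding some terms can only decrease the total, giving
\[
  \gamma_{ij} \ \leq \ \sum_{\sigma \text{ maximal in } \Sigma} \nvol(\sigma) \ = \ \binom{m+n-2}{m-1} \enspace.
\]
There is no real obstacle here; the only nontrivial input is the volume formula for the product of simplices, which is standard (see, e.g., \cite{DLRS}). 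The argument applies uniformly to regular and non-regular triangulations, matching the phrasing of the statement.
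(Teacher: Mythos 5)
Your proof is correct and takes essentially the same route as the paper's: both arguments reduce the bound on $\gamma_{ij}$ to the fact that the total normalized volume of $\Delta_{m-1}\times\Delta_{n-1}$ equals $\binom{m+n-2}{m-1}$. The only (cosmetic) difference is that the paper first invokes unimodularity of every triangulation of a product of simplices to identify $\gamma_{ij}$ with a count of maximal cells, whereas you bound the partial sum of nonnegative normalized volumes by the full sum directly, which makes the unimodularity input unnecessary.
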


\begin{proof}
  Each triangulation of $\Delta_{m-1}\times\Delta_{n-1}$ is unimodular, and hence $\gamma_{ij}$ counts the number of maximal cells containing the vertex $e_i{\times}e_j$; see \cite[\S6.2.2]{DLRS}.
  It follows that the number of maximal cells of $\Sigma$ equals $\tbinom{m+n-2}{m-1}$, which is the normalized volume of $\Delta_{m-1}\times\Delta_{n-1}$.
\end{proof}

We return to studying our affine family $V(M)=U+M\cdot W$ with $U\in\RR^{m\times n}$, but we now restrict $W\in\ZZ^{m\times n}$ to be integral.
We denote the Euclidean scalar product of matrices as $\langle A,B\rangle=\sum_{i,j}a_{ij}b_{ij}=\Tr(A^\top B)$, and $\lVert A\rVert_\infty=\max_{i,j} \lvert a_{ij}\lvert$ is the $\infty$-norm, while $\lVert B\rVert_1=\sum_{i,j} \lvert b_{ij}\lvert$ is the $1$-norm.
Our first main result is the following quantitative version of Proposition~\ref{prop:affDep}.

\begin{theorem}\label{thm:affDep}
  For every $M>\tbinom{m+n-2}{m-1}\lVert U\rVert_1$, the covector decompositions $\CovDec(V(M))$ stabilize, and the tropical vertices of $\FW(V(M))$ as well as the tropical medians form affine families.
\end{theorem}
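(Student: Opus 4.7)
The plan is to reduce Theorem~\ref{thm:affDep} to the qualitative Proposition~\ref{prop:affDep} by producing an explicit value of $M_0$. I would view the evolution of $\Sigma(V(M))$ as a path in the secondary fan of $\Delta_{m-1}\times\Delta_{n-1}$, so that combinatorial transitions of the subdivision correspond to wall-crossings of the affine line $M\mapsto U+MW$. Once the line stays inside a single full-dimensional secondary cone, the walk argument in the proof of Proposition~\ref{prop:affDep} already supplies the affine dependence of the tropical vertices of $\FW(V(M))$ and of the tropical median on $M$, so the only new content to establish is an upper bound on the last wall-crossing.

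The key ingredient is the secondary-polytope picture: the vertices of the secondary polytope of $\Delta_{m-1}\times\Delta_{n-1}$ are precisely the GKZ-vectors of the regular triangulations, and a lifting $X$ induces the triangulation $\Sigma$ iff $\gamma(\Sigma)$ maximizes $\langle X,\cdot\rangle$ over them. Every wall between two full-dimensional secondary cones labelled by triangulations $\Sigma,\Sigma'$ therefore lies in the hyperplane
\[
  H_{\Sigma,\Sigma'} \ = \ \bigSetOf{X\in\RR^{m\times n}}{\langle X,\gamma(\Sigma)-\gamma(\Sigma')\rangle = 0},
\]
and the line $V(M)=U+MW$ meets $H_{\Sigma,\Sigma'}$ at a unique parameter
\[
  M^* \ = \ \frac{\langle U,\gamma(\Sigma')-\gamma(\Sigma)\rangle}{\langle W,\gamma(\Sigma)-\gamma(\Sigma')\rangle},
\]
provided the denominator is nonzero; if it vanishes the line is parallel to the wall and contributes no transition.

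The heart of the argument is then the bound $|M^*|\leq\tbinom{m+n-2}{m-1}\lVert U\rVert_1$. Since $W\in\ZZ^{m\times n}$ and $\gamma(\Sigma)-\gamma(\Sigma')$ is a nonzero vector in $\ZZ^{m\times n}$, the denominator is a nonzero integer, hence of absolute value at least $1$. The numerator is bounded by $\lVert U\rVert_1\cdot\lVert\gamma(\Sigma)-\gamma(\Sigma')\rVert_\infty$, and Lemma~\ref{lem:edgeSecPoly} together with the nonnegativity of GKZ entries gives $\lVert\gamma(\Sigma)-\gamma(\Sigma')\rVert_\infty\leq\tbinom{m+n-2}{m-1}$. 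Taking the maximum of $|M^*|$ over all relevant pairs of triangulations delivers the desired threshold; for any strictly larger $M$ the line cannot leave its current full-dimensional secondary cone, so $\Sigma(V(M))$ and its dual $\CovDec(V(M))$ stabilize, and Proposition~\ref{prop:affDep} concludes the proof.

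The step I would be most careful with is checking that every combinatorial transition along the line is captured by a wall between two top-dimensional secondary cones as above, rather than by a degeneracy arising from subdivisions that are not triangulations. This is standard from the description of the secondary fan via the secondary polytope, but it is exactly what ensures that the bound covers every possible change of $\Sigma(V(M))$, and not only generic ones; once it is in place, what remains is a straightforward comparison of an integer denominator against a norm-controlled numerator.
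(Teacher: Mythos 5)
Your proposal is correct and follows essentially the same route as the paper: bound the parameter values at which the line $U+MW$ can cross a wall of the secondary fan, using that each wall's normal is a difference of GKZ-vectors, that Lemma~\ref{lem:edgeSecPoly} bounds the entries of that difference by $\tbinom{m+n-2}{m-1}$, and that integrality of $W$ forces the denominator to be a nonzero integer. The only cosmetic difference is that the paper passes to a primitive normal vector of each facet while you work with the GKZ-vector difference directly; both give the same estimate $\lvert M^*\rvert\leq\tbinom{m+n-2}{m-1}\lVert U\rVert_1$.
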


\begin{proof}
  We follow the argument of Proposition~\ref{prop:affDep} and pay attention to complexity issues.
  
  The argument employs the linear hyperplane arrangement in $\RR^{m\times n}$ formed by the facets of the maximal secondary cones of $\Delta_{m-1}\times\Delta_{n-1}$.
  Those cones bijectively correspond to the regular triangulations.
  Let $\Phi$ be such a facet, which is the common intersection of exactly two maximal secondary cones.
  The secondary fan is the normal fan of the secondary polytope, and hence a normal vector of $\Phi$ is the same as a scaled edge of the secondary polytope.
  The direction of such an edge is given by the difference of two GKZ-vectors.
  Thus it follows from Lemma~\ref{lem:edgeSecPoly} that there is a primitive normal vector, $A=(a_{ij})_{i,j}\in\ZZ^{m\times n}$, of $\Phi$ such that $|a_{ij}|\leq \tbinom{m+n-2}{m-1}$ for all $i,j$.
  We set $M_0:=\tbinom{m+n-2}{m-1}\lVert U\rVert_1$, and we have to show that that the sign of $\langle A,V(M)\rangle$ is constant for all $M>M_0$.
  If $\langle A,W\rangle=0$ then $\langle A,V(M)\rangle = \langle A, U+M\cdot W\rangle = \langle A,U\rangle$, and the claim is trivially valid.
  So we may assume that $\langle A,W\rangle\neq0$.

  Aiming for a contradiction, we further assume that there are distinct $M_1,M_3>M_0$ such that $\langle A,V(M_1)\rangle$ and $\langle A,V(M_3)\rangle$ have different signs.
  By continuity it follows that there is $M_2$ with $M_1\leq M_2 \leq M_3$ such that $\langle A,V(M_2)\rangle=0$ and $M_2>M_0$.
  Now the assumption $\langle A,W\rangle\neq0$ entails $M_2=-\langle A,U\rangle/\langle A,W\rangle$.
  We have
  \[
    |\langle A,U\rangle|=\left|\sum_{i,j}a_{ij}u_{ij}\right|\leq\lVert A \rVert_\infty\cdot\lVert U\rVert_1\leq \binom{m+n-2}{m-1}\lVert U\rVert_1 \enspace.
  \]
  This yields
  \[
    M_2 = -\frac{\langle A,U\rangle}{\langle A,W\rangle} \leq \frac{\lvert \langle A,U\rangle\rvert}{\lvert \langle A,W\rangle\rvert}\leq\binom{m+n-2}{m-1}\lVert U\rVert_1 = M_0 \enspace ,
  \]
  where the last inequality holds because $|\langle A,W\rangle|$ is a positive integer.
  That contradicts $M_2>M_0$, and this completes the proof.
\end{proof}


\section{Constructing supertrees}\label{sec:supertree}
\noindent
Now we are ready to explain our method for constructing supertrees.
They will arise as parameterized tropical medians of suitably modified equidistant trees.

We start out by describing our setup.
A rooted metric tree is \emph{equidistant} if the distance from the root to any leaf is the same.
The distance from the root to any leaf is the \emph{height} of the tree.
The latter quantity equals half of the maximal distance between any two leaves.
The leaves are labeled, and these labels are called \emph{taxa}.
Now we consider a collection $\cT=\{T_1,\dots,T_m\}$ of equidistant rooted metric trees, with labeled leaves, and we write $\cL(T_k)$ for the taxa of the tree $T_k$.
The combined set of taxa is $\cL=\bigcup_{k=1}^m\cL(T_k)$, and we let $n=\lvert\cL\rvert$ be the cardinality of $\cL$.
For consistency, we assume that the $m$ trees in $\cT$ share the same height.

A \emph{dissimilarity map} is a symmetric matrix $D=(D_{ij})_{i,j}$ with zero diagonal.
This is an \emph{ultrametric} if $D$ is nonnegative, and additionally the \emph{ultrametric inequality} $D_{ik} \leq \max (D_{ij} , D_{jk})$ is satisfied for all taxa $i,j,k$.
It is known that the ultrametrics are precisely the dissimilarity functions among the leaves of equidistant trees \cite[Theorem 7.2.5]{SempleSteel:2003}.

For each tree $T_\ell$ we define an ultrametric $D^{(\ell)}(M)=(D_{ij}^{(\ell)})_{i,j}$ on the set $\cL$ by setting
\begin{equation}\label{eq:distances}
  D_{ij}^{(\ell)} =
  \begin{cases}
    \text{distance in $T_\ell$} & \text{if $i,j\in\cL(T_\ell)$}\\
    M & \text{otherwise.}
  \end{cases}
\end{equation}
Here we assume that $M\gg0$ is a sufficiently large real number, so that $D^{(\ell)}(M)$ is, indeed, an ultrametric.

Due to Theorem~\ref{thm:FW}, the Fermat--Weber set $\FW(\cT)$, where each tree $T_\ell$ is represented by its ultrametric $D^{(\ell)}(M)$, is a polytrope in the tropical projective torus $\torus{\tbinom{\cL}{2}}$.
Ardila and Klivans \cite{ArdilaKlivans:2006} showed that the space of equidistant metric trees, $\frT(\cL)$, is max-tropically convex.
So, in view of Theorem~\ref{thm:FW}, $\FW(\cT)$ is actually contained in the subset $\frT(\cL)$.
Further, by Proposition~\ref{prop:affDep}, the tropical vertices of $\FW(\cT)$ depend affinely on $M$ for $M\gg0$, whence they trace out a ray in $\frT(\cL)$.
The Fermat--Weber set $\FW(\cT)$ is convex in the ordinary sense when we view $\torus{\tbinom{\cL}{2}}$ as a real vector space.
Therefore, $\FW(\cT)$ contains the tropical median, which is defined as the ordinary average of the (at most $\tbinom{n}{2}$) tropical vertices.
We call the tropical median, $c(\cT)$, in $\frT(\cL)$ the \emph{tropical supertree} of $\cT$.
The dissimilarity map associated with $c(\cT)$ depends on $M\gg0$, but we will show that the tree topology does not.
This will become our second main result.

Before we proceed, we need additional terminology from phylogenetics.
Three pairwise distinct taxa $i,j,k\in\cL$ form a \emph{rooted triplet}, denoted $ij|k$, of a rooted phylogenetic tree $T$ on $\cL$ if the lowest common ancestor, $\lca_T(i,j)$ of $i$ and $j$ is not an ancestor of $k$.
If $T$ is corresponds to the ultrametric $D$, then $ij|k$ is a rooted triplet if and only if $D_{ij}<D_{ik}$.
We write $r(T)$ (or $r(D)$) for the set of all rooted triplets.
Page, Yoshida, and Zhang \cite[Theorem~3.2]{Page+Yoshida+Zhang:2020} showed that any two trees in the relative interior of a covector cell share the same topology.
We use the techniques from \cite[\textsection 6.4]{SempleSteel:2003} to prove a vast generalization of their result.
\begin{proposition} \label{prop:ordConvTreeSpace}
  Let $\cS$ be an arbitrary convex subset of $\frT(\cL)$, in the ordinary sense.
  Then any two points in the relative interior of $\cS$ are equidistant trees with the same topology.
\end{proposition}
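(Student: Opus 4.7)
The plan is to show that any two $D, D' \in \relint(\cS)$ induce the same triplet structure on every triple of taxa $\{i,j,k\} \subseteq \cL$, meaning that either both carry the same rooted triplet on $\{i,j,k\}$, or both carry the degenerate ``fan'' configuration $D_{ij} = D_{ik} = D_{jk}$. By the reconstruction of an equidistant tree from its triplet data (see \cite[\textsection 6.4]{SempleSteel:2003}), this will imply that $D$ and $D'$ correspond to the same tree topology; equidistance itself is automatic from the inclusion $\cS \subseteq \frT(\cL)$.

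The first step is to exploit the defining property of the relative interior. Since both $D$ and $D'$ lie in $\relint(\cS)$ and $\cS$ is ordinarily convex, there exists $\epsilon > 0$ such that the affine segment $E_t := (1-t)D + tD'$ remains in $\cS \subseteq \frT(\cL)$ for every $t$ in an open interval $I \supseteq [-\epsilon, 1+\epsilon]$. Consequently every $E_t$ is an ultrametric on $\cL$, and the two parameter values $t=0$ and $t=1$ lie in the interior of $I$.

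Now fix a triple $\{i,j,k\}$ and consider the three affine functions $a(t) = (E_t)_{ij}$, $b(t) = (E_t)_{ik}$, $c(t) = (E_t)_{jk}$. The ultrametric inequality applied to $E_t$ forces $\max\{a(t),b(t),c(t)\}$ to be attained at least twice for every $t \in I$. Each of the pairwise differences $a-b$, $b-c$, $a-c$ is affine and hence either vanishes identically on $I$ or has a single zero; moreover they satisfy the linear relation $(a-b)+(b-c)=(a-c)$. If none of the three vanished identically, then on the complement of at most three points the values $a(t), b(t), c(t)$ would be pairwise distinct and the maximum would be uniquely attained, violating the ultrametric condition. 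Hence at least one pair agrees identically on $I$; by the linear relation, either all three agree, or exactly one pair does, say $a\equiv b$. In that subcase $a-c \geq 0$ on $I$, and an affine nonnegative function on an open interval is either identically zero (reducing to the fully degenerate case) or strictly positive throughout; in the latter alternative both $t=0$ and $t=1$ give $D_{ij}=D_{ik}>D_{jk}$ and $D'_{ij}=D'_{ik}>D'_{jk}$, so $D$ and $D'$ share the rooted triplet $jk|i$. The fully degenerate alternative $a \equiv b \equiv c$ yields a fan at both $D$ and $D'$. In every case $D$ and $D'$ agree on $\{i,j,k\}$, and ranging over all triples completes the argument.

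The main obstacle is ensuring that both $t = 0$ and $t = 1$ lie strictly in the interior of $I$; this is exactly why the hypothesis demands that both $D$ and $D'$ belong to $\relint(\cS)$, and it is what rules out borderline scenarios in which, say, $a - c$ vanishes precisely at $t = 0$ while remaining positive on $(0, 1+\epsilon]$, which would allow the triplet structure at $D$ to be genuinely different from that at $D'$.
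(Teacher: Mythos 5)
Your proof is correct and follows essentially the same route as the paper: reduce to rooted-triplet data via \cite[\textsection 6.4]{SempleSteel:2003}, use convexity to test the ultrametric inequality along the segment between the two points, and invoke the prolongation property of the relative interior to rule out the degenerate fan configurations. The only difference is cosmetic — you run a single unified analysis of the three affine functions on an interval extended beyond both endpoints, where the paper argues by two separate contradictions (a midpoint argument and a one-sided prolongation $E+\epsilon(E-D)$).
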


\begin{proof}
  By \cite[Theorem~6.4.1]{SempleSteel:2003} the tree topology is determined by the set of rooted triplets.
  That is, it suffices to prove that $r(D)=r(E)$ for any two ultrametrics $D,E\in\relint(\cS)$.
  To this end let $ij|k\in r(D)$, i.e., $D_{ij}<D_{ik}=D_{jk}$.

  Our first goal is to prove that $ik|j$ and $jk|i$ are not contained in $r(E)$.
  The condition $ij|k\in r(D)$ is symmetric in $i$ and $j$, whence it suffices to show $(ik|j)\not\in r(E)$.
  We assume the contrary, which is equivalent to $E_{jk}=E_{ij}>E_{ik}$.
  Due to the convexity of $\cS$, the point $F=\tfrac{1}{2}(D+E)$ belongs to $\cS\subset\frT(\cL)$.
  Yet this is not an ultrametric as
  \[
    F_{jk}=\frac{1}{2}(D_{jk}+E_{jk})>\frac{1}{2}(D_{ij}+E_{ij})=F_{ij}
  \]
  and, similarly, $F_{jk}>F_{ik}$.
  We conclude that either $(ij|k)\in r(E)$ or $E_{ij}=E_{ik}=E_{jk}$, and we need to exclude the latter case.
  Again we argue by contradiction.

  So we assume that $E_{ij}=E_{ik}=E_{jk}$.
  Then we consider $F=E+\epsilon(E-D)$, which lies in $\relint(\cS)$ for some sufficiently small $\epsilon>0$.
  This yields $F_{jk}=F_{ik}<F_{ij}$, which violates the ultrametric inequality.
  Hence, we infer that $(ij|k)\in r(E)$ and thus $r(D)\subseteq r(E)$.
  The reverse inclusion follows from exchanging $D$ with $E$.
\end{proof}

The Proposition~\ref{prop:ordConvTreeSpace} says that any ordinarily convex subset $\frT(\cL)$ is contained in some cone of the Bergman fan of the complete graph on the node set $\cL$.
Moreover, any relatively interior point of the unique minimal cone containing that set dictates the tree topology.
Now we are equipped to determine the tropical median consensus tree, $c(M)$, of the ultrametrics $D^{(1)}(M),\dots,D^{(m)}(M)$, as an ultrametric.
We call it the \emph{tropical supertree} of $\cT=\{T_1,\dots,T_m\}$.

\begin{theorem}\label{thm:supertree}
  The tropical supertree $c(M)$ of $\cT$ has the same tree topology for $M\gg0$.
  Moreover, the length of each edge of $c(M)$ affinely depends on $M$ for $M\gg0$.
\end{theorem}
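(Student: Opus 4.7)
The plan is to combine Proposition~\ref{prop:affDep}, applied to the obvious affine family of parametric ultrametrics, with Proposition~\ref{prop:ordConvTreeSpace}, in order to fix the tree topology along the trajectory $M\mapsto c(M)$, and then to read off the edge lengths.

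First I would write each parametric ultrametric as $D^{(\ell)}(M)=U^{(\ell)}+M\cdot W^{(\ell)}$, where $U^{(\ell)}_{ij}$ equals the $T_\ell$-distance between taxa $i$ and $j$ whenever both lie in $\cL(T_\ell)$ and vanishes otherwise, and $W^{(\ell)}_{ij}\in\{0,1\}$ is the indicator of the pairs with at least one taxon outside $\cL(T_\ell)$. Viewed in $\torus{\binom{n}{2}}$, the collection $\cT$ is therefore an affine family of point configurations in the sense of Section~\ref{sec:parametric}, and Proposition~\ref{prop:affDep} produces some $M_0>0$ such that $c(M)=p+Mq$ for suitable $p,q\in\torus{\binom{n}{2}}$ and every $M>M_0$.

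Second, I would observe that the image $R:=\smallSetOf{c(M)}{M>M_0}$ is an ordinarily convex subset of $\frT(\cL)$: it lies in $\FW(\cT)\subseteq\frT(\cL)$ by the discussion immediately preceding the theorem, and as the affine image of an open half-line it is either a single point or an open half-line itself, in both cases equal to its own relative interior. Proposition~\ref{prop:ordConvTreeSpace} then forces every $c(M)$ with $M>M_0$ to be an equidistant tree of one and the same topology, which is the first assertion.

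Third, with the topology fixed, every internal node $v$ has height $\tfrac{1}{2}c(M)_{ij}$ for any pair of descendant leaves $i,j$ with $\lca(i,j)=v$, and each edge length is a difference of two such heights. Since the entries of $c(M)=p+Mq$ are affine in $M$, so are all internal heights and hence all edge lengths. The only step requiring genuine care is checking the affine-family setup of Proposition~\ref{prop:affDep} and then verifying that the whole ray $R$ lies in $\frT(\cL)$ so that Proposition~\ref{prop:ordConvTreeSpace} is applicable; after that the argument is essentially a packaging of the two earlier propositions.
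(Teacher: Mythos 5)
Your proposal is correct and follows essentially the same route as the paper: apply Proposition~\ref{prop:affDep} to the affine family of ultrametrics to get $c(M)=p+Mq$ for $M\gg 0$, note that the resulting point or open ray is an ordinarily convex subset of $\frT(\cL)$ equal to its own relative interior, and invoke Proposition~\ref{prop:ordConvTreeSpace} to fix the topology. Your third step, deducing the affine dependence of edge lengths from the affine dependence of the entries of $c(M)$ once the topology is fixed, is a welcome explicit treatment of the second assertion, which the paper's proof leaves implicit.
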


\begin{proof}
  By Proposition~\ref{prop:affDep}, there exists $M_0>0$ such that the set $\cS:=\smallSetOf{c(M)}{M>M_0}$ is a point or a open ray in the tree space $\frT(\cL)$.
  In particular, $\cS$ is an ordinarily convex subset of $\frT(\cL)$.
  Now Proposition~\ref{prop:ordConvTreeSpace} implies that all trees in $\cS$ share the same tree topology.
\end{proof}

It follows from the construction that the tropical supertree of trees with the same set of taxa is exactly the tropical median consensus tree from \cite[\S5]{ComaneciJoswig:2205.00036}.
This is the case if and only if each edge length is strictly smaller than the parameter $M$.



Theorem~\ref{thm:supertree} includes an effective method to construct supertrees; this is listed as Algorithm~\ref{alg:tsm}.
We want to assess its worst case complexity.
The value for $M$ selected in the algorithm is a generous upper bound of the bound from Theorem~\ref{thm:affDep}; it can be computed in $O(mn^2)$ time.
Extending the tree distances to the combined set of all taxa $\cL$ needs also $O(mn^2)$ operations.
The dominating step is the construction of the tropical median consensus tree at the very end of the procedure; this is \cite[Theorem~17]{ComaneciJoswig:2205.00036}.
The worst case running time is of order $O(n^4m\log^{2}m)$ by \cite[Corollary~15]{ComaneciJoswig:2205.00036}.
The key idea of that algorithm is to reduce computing tropical medians to a transportation problem, which can be solved by the network simplex algorithm \cite[\textsection 11.5]{AMO}.
We summarize our findings as follows.

\begin{algorithm}[th]
\caption{Tropical supertree method}\label{alg:tsm}
\begin{algorithmic}
\Require $T_1,\dots,T_m$ phylogenetic trees of same height
\Ensure phylogenetic tree on $\bigcup_{k=1}^m \cL(T_k)$
\State $\cL \gets \cup_{k=1}^m \cL(T_k)$
\State $n \gets |\cL|$
\State $M \gets 2^{m+\binom{n}{2}-1}\sum_{k=1}^m\|D^{(k)}\|_1 + 1$
\For{$k\gets 1,2,\dots, m$}
  \State compute $D^{(k)}(M)$ from \eqref{eq:distances}
\EndFor
\State \Return \textsc{TropicalMedianConsensusTree}$(D^{(1)}(M),\dots,D^{(m)}(M))$
\end{algorithmic}
\end{algorithm}

\begin{corollary}
  Assuming $m\geq\binom{n}{2}$, the tropical supertree of $m$ trees on a combined set of $n$ taxa can be computed in $O(n^4m\log^2 m)$ time.
\end{corollary}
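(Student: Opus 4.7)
The plan is to confirm that Algorithm~\ref{alg:tsm} is correct for its choice of $M$ and then to tally the per-step cost, using \cite[Corollary~15]{ComaneciJoswig:2205.00036} for the dominant term. Forward-looking: first check that the $M$ selected in the algorithm surpasses the stabilization threshold of Theorem~\ref{thm:affDep}, then count operations step by step, and finally observe that the closing call to TropicalMedianConsensusTree swallows all setup costs.

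For correctness, I would cast the extended ultrametrics as an affine family. Writing $D^{(\ell)}(M) = U_\ell + M W_\ell$ with $U_\ell$ carrying the original distances (zero outside $\cL(T_\ell)$) and $W_\ell$ the $\{0,1\}$-indicator of pairs with at least one endpoint not in $\cL(T_\ell)$, the stacked matrix $U \in \RR^{m\times\binom{n}{2}}$ satisfies $\lVert U\rVert_1 = \sum_k\lVert D^{(k)}\rVert_1$. Since each $D^{(\ell)}(M)$ is viewed in the tropical projective torus over $\binom{n}{2}$ coordinates, Theorem~\ref{thm:affDep} applies with dimensions $(m,\binom{n}{2})$ and requires $M > \tbinom{m+\binom{n}{2}-2}{m-1}\lVert U\rVert_1$. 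The crude estimate $\tbinom{a+b}{a}\leq 2^{a+b}$ gives $\tbinom{m+\binom{n}{2}-2}{m-1}\leq 2^{m+\binom{n}{2}-2}$, so the algorithm's $M$ strictly exceeds the threshold. In particular each $D^{(\ell)}(M)$ is an ultrametric, and Theorem~\ref{thm:supertree} guarantees that the returned tropical supertree has a stable, well-defined topology.

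For the running time I would count operations in the unit-cost arithmetic model. Building $\cL$ and $n$ via standard set operations costs $O(mn)$; forming $M$ needs $O(mn^2)$ additions to evaluate $\sum_k\lVert D^{(k)}\rVert_1$ plus a single shift; extending each tree's dissimilarity matrix by rule \eqref{eq:distances} fills in at most $\binom{n}{2}$ entries per tree, for $O(mn^2)$ in total. The concluding invocation of TropicalMedianConsensusTree on $m$ ultrametrics runs in $O(n^4 m\log^2 m)$ time by \cite[Corollary~15]{ComaneciJoswig:2205.00036}. The setup cost $O(mn^2)$ is clearly absorbed by $O(n^4 m\log^2 m)$, so the total matches the claim; the hypothesis $m\geq\binom{n}{2}$ is inherited from the prerequisites of the referenced consensus-tree algorithm.

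The main obstacle, and the only step that requires any thought, is the correctness check: one must be careful that the ambient dimension in Theorem~\ref{thm:affDep} is $\binom{n}{2}$ rather than $n$, identify the right affine decomposition $D^{(\ell)}(M) = U_\ell + MW_\ell$, and then match the algorithmic formula to the Theorem's threshold by a crude binomial bound. Once this is settled, the complexity tally is routine.
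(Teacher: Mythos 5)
Your proposal is correct and follows essentially the same route as the paper: verify that the algorithm's choice of $M$ is a generous upper bound for the threshold of Theorem~\ref{thm:affDep} (applied in ambient dimension $\binom{n}{2}$), observe that the setup costs $O(mn^2)$, and let the concluding call to the tropical median consensus tree routine of \cite[Corollary~15]{ComaneciJoswig:2205.00036} dominate at $O(n^4m\log^2 m)$. You merely make explicit the binomial estimate $\tbinom{m+\binom{n}{2}-2}{m-1}\leq 2^{m+\binom{n}{2}-2}$ and the affine decomposition $D^{(\ell)}(M)=U_\ell+MW_\ell$, which the paper leaves implicit.
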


\begin{figure}[th]\centering
    
  \begin{tabular}{lcl}
    \small
    \includestandalone[height=5cm]{example}
    & \hphantom{xxx} & \includestandalone[height=5cm]{example-B} \\
    \multicolumn{1}{c}{(a)}  && \multicolumn{1}{c}{(b)} \\
    \includestandalone[height=5cm]{example-E}
    && \includestandalone[height=5cm]{exampleSuperTree} \\
    \multicolumn{1}{c}{(c)}  &&  \multicolumn{1}{c}{(d)}
    \end{tabular}

    \caption{Three trees (a), (b), (c) and their tropical supertree (d) in $\RR^{\tbinom{5}{2}}/\RR1$}
    \label{fig:trees}
\end{figure}

We conclude this section by showing that tropical supertrees have good properties.
Two subsets $X,Y$ of taxa in a phylogenetic tree $T$ form a \emph{nesting}, denoted by $X<Y$, if $\lca_{T}(X)$ is a strict descendant of $\lca_{T}(X\cup Y)$.
In terms of the ultrametric $D$ of $T$, that condition is equivalent to
\begin{equation}\label{eq:halfspace}
  \max_{i,j\in X} D_{ij}<\max_{k,\ell\in X\cup Y}D_{k\ell} \enspace .
\end{equation}
Note that \eqref{eq:halfspace} defines an open tropical half-space \cite[Chapter~6]{ETC}.
Consequently, the set $\frT(\cL;X<Y)$ of trees with the nesting $X<Y$ is tropically convex.
Note that rooted triplets are particular cases of nestings.
A supertree method is called \emph{Pareto on nestings} is the output tree displays any nesting that appears in all the input trees; cf.\ \cite{propSupertree}.

\begin{proposition}
  The tropical supertree method is Pareto on nestings.
\end{proposition}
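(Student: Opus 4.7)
The plan is to reduce the Pareto property to the tropical convexity of the open half-space $\frT(\cL;X<Y)$ combined with Theorem~\ref{thm:FW}. Suppose $X<Y$ is a nesting displayed by every input tree $T_1,\dots,T_m$. First I would verify that each extended ultrametric $D^{(\ell)}(M)$ still lies in $\frT(\cL;X<Y)$. The nesting involves only taxa in $X\cup Y$, and for the nesting to appear in $T_\ell$ at all one needs $X\cup Y\subseteq\cL(T_\ell)$; hence every entry $D^{(\ell)}(M)_{ij}$ with $i,j\in X\cup Y$ coincides with the true distance in $T_\ell$, and no artificial value $M$ appears on either side of \eqref{eq:halfspace}. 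Therefore the strict inequality that expresses $X<Y$ in $T_\ell$ transfers verbatim to $D^{(\ell)}(M)$.

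Next I would use the tropical convexity of $\frT(\cL;X<Y)$ recorded just before the proposition to conclude that the tropical convex hull $\tconv\bigl(D^{(1)}(M),\dots,D^{(m)}(M)\bigr)$ is entirely contained in $\frT(\cL;X<Y)$. One easily checks that the open half-space is closed under tropical combinations: if two ultrametrics satisfy $\max_{i,j\in X}D_{ij}<\max_{k,\ell\in X\cup Y}D_{k\ell}$, then so does any tropical combination, since taking componentwise maxima on both sides preserves the strict inequality.

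Finally I would invoke Theorem~\ref{thm:FW}, which gives $\FW(\cT)\subseteq\tconv\bigl(D^{(1)}(M),\dots,D^{(m)}(M)\bigr)$, and conclude $\FW(\cT)\subseteq\frT(\cL;X<Y)$. Since the tropical supertree $c(\cT)$ is by construction a specific point of $\FW(\cT)$ (the ordinary average of its tropical vertices), it satisfies \eqref{eq:halfspace} and hence displays the nesting $X<Y$. The only delicate step is the first one: one must confirm that the big-$M$ values introduced by \eqref{eq:distances} never contaminate the inequality \eqref{eq:halfspace}, and this is exactly guaranteed by the fact that a nesting $X<Y$ is only meaningful when all its taxa belong to $\cL(T_\ell)$. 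After that, everything follows formally from tropical convexity and Theorem~\ref{thm:FW}.
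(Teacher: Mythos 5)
Your proposal is correct and follows essentially the same route as the paper: the input ultrametrics lie in the tropically convex open half-space $\frT(\cL;X<Y)$, and the supertree, being a point of $\FW(\cT)\subseteq\tconv\bigl(D^{(1)}(M),\dots,D^{(m)}(M)\bigr)$ by Theorem~\ref{thm:FW}, therefore also lies there. Your additional checks (that the big-$M$ entries cannot enter \eqref{eq:halfspace} because a nesting appearing in $T_\ell$ forces $X\cup Y\subseteq\cL(T_\ell)$, and that tropical combinations preserve the strict inequality) are correct elaborations of steps the paper leaves implicit.
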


\begin{proof}
  Consider any nesting $X<Y$ which appears in all the input trees $T_1,\dots,T_m$.
  Then the corresponding ultrametrics $D^{(1)}(M),\dots,D^{(m)}(M)$ on $\cL$, the combined set of taxa, lie in $\frT(\cL;X<Y)$.
  As $\frT(\cL;X<Y)$ is tropically convex, the tropical supertree $c(M)$ lies in $\frT(\cL;X<Y)$ for $M$ sufficiently large.
  This shows that $c(M)$ displays the nesting $X<Y$.
\end{proof}

Figure~\ref{fig:trees} shows a tropical supertree generated from three trees on a total of five taxa.

\section{Conclusion}
\noindent
The proof of Theorem~\ref{thm:affDep}, about the dependence of the tropical Fermat--Weber set $\FW(V(M))$ on the parameter $M$, exploits basic complexity bounds concerning triangulations of products of simplices.
It seems worth-while to have a closer look at the secondary fan:
\begin{question}
  What are the possible combinatorial types of $\FW(V(M))$ as $M$ arbitrarily varies over the real line?
\end{question}
Applied to phylogenetic trees, the answer to the question above will also give a better understanding of the possible tree topologies of the tropical supertree $c(M)$ as $M$ varies.
If $M$ is seen as a measure of uncertainty, the knowledge of tree topologies before stabilization might strengthen or reveal new relationships between the taxa.

In view of \cite{ArdilaKlivans:2006} the restriction to equidistant trees is natural in the tropical setting.
Yet from an application point of view this is quite restrictive.
\begin{question}
  Does the tropical supertree method admit a natural generalization to non-equidistant trees?
\end{question}
This would mean to pass to general tree metrics, which would blur the role of the root.
Hence, such a generalization should probably consider unrooted metric trees.
In that case, we can optimize over the tropical Grassmannian $\TGr(2,n)$, where $n$ is the total number of taxa; cf.\ \cite[\S4.3]{Tropical+Book} and \cite[\S10.6]{ETC}.
Unlike $\frT(\cL)$, the tropical Grassmannian is not tropically convex, resulting in a constrained Fermat--Weber problem.
This may lead to a very high complexity and might be intrinsic.
This should be compared to the compatibility problem of phylogenetic trees, which is $\textsf{NP}$-complete for the unrooted case, while the rooted cases can be solved in polynomial time \cite[Chapter~6]{SempleSteel:2003}.
Another approach could employ tight spans of finite metric spaces in the sense of Dress \cite{Dress:1984}; cf.\ \cite[\S10.7]{ETC} and \cite[\S7.4]{SempleSteel:2003}.


\printbibliography

\end{document}

\appendix

\section{Compactifications of the tropical projective torus}
\todo[inline,author=Michael]{The analysis below is not fully correct.  Moving between canonical forms in $\torus{n}$ (zero sum vs min/max=0) is a nontrivial operation; here it looks like the tropical median operation does not admit a continuous extension to the boundary!}

\todo[inline]{To be studied in the context of compactifications of arbitrary metric spaces (with respect to a possibly nonsymmetric distance function).  For instance, look at \cite{Walsh:2007}.  There could be a way to define boundary points for the min-tropical torus such that the compactification becomes the max-tropical projective space.}

The tropical projective space over the $(\max,+)$-algebra is defined as $\TP_{\max}^{n-1}=(\TT_{\max}^n\setminus\{-\infty\1\})/\RR\1$.
It is a compact set containing the tropical projective torus as a subset; hence it is a compactification of $\torus{n}$.
The map $L^{\max}:\Delta_{n-1}\to\TP_{\max}^{n-1}$, $L^{\max}(x):=(\log(x_1),\dots,\log(x_n))$ is a homeomorphism mapping the interior of the unit simplex onto $\torus{n}$.

However, looking at the $(\min,+)$-algebra, we obtain other compactification of the tropical projective torus: $\TP_{\min}^{n-1}=(\TT_{\min}^n\setminus\{+\infty\1\})/\RR\1$.
Again, this is isomorphic to the unit simplex, but now under the map $L^{\min}:\Delta_{n-1}\to\TP_{\min}^{n-1}$, $L^{\min}(x):=(-\log(x_1),\dots,-\log(x_n))$.

\begin{example}
In Example~\ref{ex:paramTM}, setting $M\to\infty$, we see that
\[\transpose{V(M)} \ \to \ \begin{pmatrix}
      2  & -6  & -\infty  \\
      -\infty  & 2 & -\infty   \\
      -\infty & 4 & 6  \\
    \end{pmatrix} \enspace .
\]
in $\TP_{\max}^{n-1}$.
In this case, $c(M)$ converges to $(-\infty,-\infty,0)\in\TP_{\max}^{n-1}$.

If we want to set the maximal entries to $\infty$, then we have to look at convergence in $\TP_{\min}^{n-1}$.
In that case, $\transpose{V(M)}$ converges to
\[ \begin{pmatrix}
      +\infty  & -6  & 4  \\
      -3  & 2 & -10   \\
      +\infty & 4 & +\infty  \\
    \end{pmatrix} \enspace .
\]
Now, $c(M)\to(+\infty,0,+\infty)$ as $M\to\infty$.\todo{wrong; mixed up rows and columns; let's stick to points=rows; $3{\times}3$-matrices do not need to be transposed}
\end{example}

In general, for $a,b\in\torus{n}$ we want to study the limits $\ell=\lim_{M\to\infty}(a+Mb)\in\TP_{\min}^{n-1}$ and $L=\lim_{M\to\infty}(a+Mb)\in\TP_{\max}^{n-1}$.
We obtain:
\[
    \ell_i = \left\{\begin{array}{lr}
        +\infty, & \text{if } i\notin\argmin_{j\in[n]}b_j\\
        a_i, & \text{if } i\in\argmin_{j\in[n]}b_j
        \end{array}\right.
\]
and
\[
    L_i = \left\{\begin{array}{lr}
        -\infty, & \text{if } i\notin\argmax_{j\in[n]}b_j\\
        a_i, & \text{if } i\in\argmax_{j\in[n]}b_j
        \end{array}\right.
\]

This results from choosing a representative of $b$ modulo $\RR\1$ such that $\min_i b_i=0$ and $\max_i b_i=0$, respectively.
Note that, for $b\neq 0$, we get $\supp(L)\cap\supp(\ell)=\emptyset$.

Therefore a compactification employing both $+\infty$ and $-\infty$ must be obtained by gluing $\TP_{\min}^{n-1}$ and $\TP_{\max}^{n-1}$ along the common set $\torus{n}$ and the boundaries on the equivalence relation induced by $u\sim v$ if $\supp(u)\cap\supp(v)=\emptyset$ for $u\in\TP_{\min}^{n-1}\setminus(\torus{n})$ and $v\in\TP_{\max}^{n-1}\setminus(\torus{n})$.

Indeed, we can set
\[
    a_i = \left\{\begin{array}{ll}
        u_i, & \text{if } i\in\supp(u)\\
        v_i, & \text{if } i\in\supp(v)\\
				0,   & \text{otherwise} 
        \end{array}\right.
\]
and
\[
    b_i = \left\{\begin{array}{ll}
        -1, & \text{if } i\in\supp(u)\\
        1, & \text{if } i\in\supp(v)\\
				0,   & \text{otherwise} 
        \end{array}\right.
\]
Then $a+Mb\to u$ in $\TP_{\min}^{n-1}$ and $a+Mb\to v$ in $\TP_{\max}^{n-1}$ as $M\to\infty$.

Note that under $\sim$ we have $u_1\sim u_2$ whenever $\supp(u_1)\subseteq\supp(u_2)$ and $u_1,u_2\in\TP_{\min}^{n-1}\setminus(\torus{n})$.
Indeed, choose $k\in[n]\setminus\supp(u_2)$ and let $e_k^{\max}$ be the vector with entry $0$ at position $k$ and $-\infty$ otherwise.
Then $u_1\sim e_k^{\max}\sim u_2$.
This also implies that $u_1\sim u_2$ whenever $\supp(u_1)\cap\supp(u_2)\neq\emptyset$ for $u_1,u_2\in\TP_{\min}^{n-1}\setminus(\torus{n})$.
But then we must have $u_1\sim u_2$ for all $u_1,u_2\in\TP_{\min}^{n-1}\setminus(\torus{n})$ when $n\geq 3$.
Indeed, the only case to be checked is $\supp(u_1)\cap\supp(u_2)=\emptyset$.
In that case let $k\in\supp(u_1)\setminus\supp(u_2)$ and $\ell\in\supp(u_2)\setminus\supp(u_1)$ and $f_{k,\ell}\in\TP_{\min}^{n-1}\setminus(\torus{n})$ such that the entries at positions $k$ and $\ell$ are $0$ and $+\infty$ otherwise.
Note that $f_{k,\ell}\notin\torus{n}$ as $n\geq 3$.
Then we have $u_1\sim f_{k,\ell}\sim u_2$.

Therefore, for $n=2$ the sought compactification of $\torus{n}$ has two points (is a closed segment), whereas is a one-point compactification for $n\geq 3$.
